\newtheorem{theorem}{Theorem}[section]
\newtheorem{lemma}[theorem]{Lemma}
\theoremstyle{definition}
\newtheorem{openproblem}[theorem]{Open Problem}
\theoremstyle{remark}
\newtheorem{remark}[theorem]{Remark}
\numberwithin{equation}{section}
\begin{document}

\title{A Noninequality for the Fractional Gradient}


\author{Daniel Spector}
\address{
Daniel Spector\hfill\break\indent
National Chiao Tung University\hfill\break\indent
Department of Applied Mathematics\hfill\break\indent
Hsinchu, Taiwan, R.O.C.}
\address{Nonlinear Analysis Unit\hfill\break\indent 
Okinawa Institute of Science and Technology Graduate University\hfill\break\indent
1919-1 Tancha, Onna-son, Kunigami-gun\hfill\break\indent 
Okinawa, Japan}
\curraddr{}
\email{dspector@math.nctu.edu.tw}
\thanks{Supported in part by Taiwan Ministry of Science and Technology Research Grant 107-2115-M-009-002-MY2.}


\subjclass[2010]{Primary }

\date{}

\dedicatory{}

\commby{}

\begin{abstract}
In this paper we give a streamlined proof of an inequality recently obtained by the author:  For every $\alpha \in (0,1)$ there exists a constant $C=C(\alpha,d)>0$ such that
 \begin{align*}
\|u\|_{L^{d/(d-\alpha),1}(\mathbb{R}^d)} \leq C \| D^\alpha u\|_{L^1(\mathbb{R}^d;\mathbb{R}^d)}
\end{align*}
for all $u \in L^q(\mathbb{R}^d)$ for some $1 \leq q<d/(1-\alpha)$ such that $D^\alpha u:=\nabla I_{1-\alpha} u \in L^1(\mathbb{R}^d;\mathbb{R}^d)$.  We also give a counterexample which shows that in contrast to the case $\alpha =1$, the fractional gradient does not admit an $L^1$ trace inequality, i.e. $\| D^\alpha u\|_{L^1(\mathbb{R}^d;\mathbb{R}^d)}$ cannot control the integral of $u$ with respect to the Hausdorff content $\mathcal{H}^{d-\alpha}_\infty$.  The main substance of this counterexample is a result of interest in its own right, that even a weak-type estimate for the Riesz transforms fails on the space $L^1(\mathcal{H}^{d-\beta}_\infty)$, $\beta \in [1,d)$.  It is an open question whether this failure of a weak-type estimate for the Riesz transforms extends to $\beta \in (0,1)$.
\end{abstract}

\maketitle

\section{Introduction}
Let $\alpha \in (0,d)$ and define the Riesz potential of order $\alpha$ by
\begin{align*}
I_\alpha f(x) := \frac{1}{\gamma(\alpha)} \int_{\mathbb{R}^d} \frac{f(y)}{|x-y|^{d-\alpha}}\;dy,
\end{align*}
where $\gamma(\alpha)=\pi^{d/2}2^\alpha\Gamma(\alpha/2)/\Gamma(d/2-\alpha/2)$.  The failure of the possibility of the inequality
\begin{align}\label{false}
\|I_\alpha f \|_{L^{d/(d-\alpha)}(\mathbb{R}^d)} \leq C \|f\|_{L^1(\mathbb{R}^d)}
\end{align}
to hold for all $f \in L^1(\mathbb{R}^d)$ is a classical result in harmonic analysis (see, e.g. p.~119 in \cite{Sharmonic}).  This has led to various replacements, for example a weak-type estimate which has been pioneered by A. Zygmund \cite{Zygmund}:  For $\alpha \in (0,d)$ there exists a constant $C=C(\alpha,d)>0$ such that
\begin{align}\label{replacement1}
 |\{|I_\alpha f|>t\}|^{(d-\alpha)/d} \leq \frac{C}{t} \|f\|_{L^1(\mathbb{R}^d)}
\end{align}
for all $t>0$ and all $f \in L^1(\mathbb{R}^d)$.  With more assumptions one can obtain an estimate in the correct scaling Lebesgue space, for example the following consequence of the Hardy space estimate of E. Stein and G. Weiss \cite{SteinWeiss}:  For $\alpha \in (0,d)$ there exists a constant $C=C(\alpha,d)>0$ such that
\begin{align}\label{replacement2}
\|I_\alpha f \|_{L^{d/(d-\alpha)}(\mathbb{R}^d)} \leq C\left( \|f\|_{L^1(\mathbb{R}^d)} + \| DI_1f\|_{L^1(\mathbb{R}^d;\mathbb{R}^d)}\right)
\end{align}
for all $f \in L^1(\mathbb{R}^d)$ such that $DI_1 f \in L^1(\mathbb{R}^d;\mathbb{R}^d)$ (which is to say $f \in \mathcal{H}^1(\mathbb{R}^d)$, the real Hardy space).

While the estimate \eqref{replacement1} is optimal on this scale of spaces - which are the natural spaces to consider when one takes into account the integrability of the fundamental solution to the associated differential equation - the estimate \eqref{replacement2} admits improvements.  In particular, it was first observed by A. Schikorra, the author, and J. Van Schaftingen in \cite[Theorem A']{SSVS} that one does not need the $L^1(\mathbb{R}^d)$-norm of $f$:   Let $d\geq 2$ and $\alpha \in (0,d)$.  There exists a constant $C=C(\alpha,d)>0$ such that
\begin{align}\label{replacement2improvement}
\|u \|_{L^{d/(d-\alpha)}(\mathbb{R}^d)} \leq C \| D^\alpha  u\|_{L^1(\mathbb{R}^d;\mathbb{R}^d)}
\end{align}
for all $u \in C^\infty_c(\mathbb{R}^d)$ such that $D^\alpha u:= \nabla I_{1-\alpha} u \in L^1(\mathbb{R}^d;\mathbb{R}^d)$.  It was then subsequently proved by the author in \cite[p.~16]{Spector} that one has the optimal inequality on the Lorentz scale:  Let $d\geq 2$ and $\alpha \in (0,d)$.  There exists a constant $C=C(\alpha,d)>0$ such that
\begin{align}\label{replacement2improved}
\|u \|_{L^{d/(d-\alpha),1}(\mathbb{R}^d)} \leq C \| D^\alpha  u\|_{L^1(\mathbb{R}^d;\mathbb{R}^d)}
\end{align}
for all $u \in L^q(\mathbb{R}^d)$ for some $1 \leq q < \frac{d}{1-\alpha}$ such that $D^\alpha u \in L^1(\mathbb{R}^d;\mathbb{R}^d)$.  Here we recall that for such functions the fractional gradient $D^\alpha$ is defined distributionally as
\begin{align*}
\left \langle D^\alpha u , \Phi \right \rangle := - \int_{\mathbb{R}^d} I_{1-\alpha} u(x) \operatorname*{div}\Phi(x)\;dx
\end{align*}
for $\Phi \in C^\infty_c(\mathbb{R}^d)$.  Such a definition makes sense by the hypothesis $u \in L^q(\mathbb{R}^d)$ for some $1\leq q <d/(1-\alpha)$ as this ensures $I_{1-\alpha}u \in L^1_{loc}(\mathbb{R}^d)$.  One can compare \eqref{replacement2improvement} and \eqref{replacement2improved} with \eqref{replacement2}, at least for functions in the Hardy space, by taking $u=I_\alpha f$, as under this hypothesis that $D^\alpha I_\alpha f = D I_1 f$ as $L^1(\mathbb{R}^d;\mathbb{R}^d)$ vector functions is justified by the computation
\begin{align*}
\left \langle D^\alpha I_\alpha f , \Phi \right \rangle &= -\int_{\mathbb{R}^d} I_{1-\alpha} I_\alpha f(x) \operatorname*{div}\Phi(x)\;dx
\\
&=-\int_{\mathbb{R}^d}  I_1f(x) \operatorname*{div}\Phi(x)\;dx
\\
&= \int_{\mathbb{R}^d}  DI_1f(x) \cdot \Phi(x)\;dx.
\end{align*}

The choice to use $D^\alpha$ as an intrinsic object, in contrast to the classically studied fractional Laplacian, Riesz potentials, and Riesz transform is motivated by its analogy with the gradient, which has been studied, for example, in the calculus of variations \cite{SS,SS1,Schikorra1,Schikorra2}, in partial differential equations \cite{SSS,SSS1, ABES}, in relation to the theory of functions of bounded variation in \cite{BLNT, Comi-Stefani}, in continuum mechanics in \cite{Silhavy}, and in the Hardy and Sobolev inequalities established in \cite{SS1,SSVS,Spector}.  Notably there are places where the two differ, as for example in the work of G. Comi and G. Stefani \cite[Theorem 3.11 and Corollary 5.6]{Comi-Stefani}, where among other results they prove that the fractional gradient does not admit a coarea formula.  

The purpose of this paper is two-fold:  First, we give a streamlined proof of \eqref{replacement2improved} which removes some of the technical aspects concerning Lorentz spaces and sheds some insight into the estimate; second, we provide another example of a place where the fractional gradient and gradient diverge in the form of a noninequality for the fractional gradient - its failure to control the integral of $u$ with respect to an appropriate Hausdorff content.  In fact, our proof of this noninequality rests on a result of independent interest, which is a failure of even a weak-type bound for the Riesz transforms on the space of functions which are integrable with respect to the Hausdorff content.

Thus let us begin with a more transparent proof of the inequality \eqref{replacement2improved}.  In particular, while the proof in \cite{Spector} relied only on H\"older's inequality, the use of equivalent quasi-norms, and scaling properties of the Lorentz spaces, we here remove the reliance on any of their particular properties aside from the definition of the quasi-norm on $L^{d/(d-\alpha),1}(\mathbb{R}^d)$:
\begin{align*}
\|g \|_{L^{d/(d-\alpha),1}(\mathbb{R}^d)}:= \int_0^\infty |\{ |g|>t\}|^{(d-\alpha)/d}\;dt.
\end{align*}
The perspective we develop here begins with a lemma proved by S. Krantz, M. Peloso, and the author in \cite{KrantzPelosoSpector}, where an extension of the inequality \eqref{replacement2improved} to the setting of stratified groups has been proved.  The projection of this result in Euclidean space is
\begin{lemma}\label{lemma1}
Let $\alpha \in (0,1)$.  There exists a constant $C=C(\alpha,d)>0$ such that for $\mathcal{L}^d$ almost every $x \in \mathbb{R}^d$ one has the inequality
\begin{align}\label{interpolation}
|I_\alpha D\chi_E| \leq C \left(\sup_{t>0} |p_t \ast D \chi_E|\right)^{1-\alpha} \left(\sup_{t>0} |t^{1/2}Dp_t \ast \chi_E|\right)^{\alpha}
\end{align}
for all $\chi_E \in BV(\mathbb{R}^d)$, where
\begin{align*}
p_t(x)=\frac{1}{(4\pi t)^{d/2}} e^{\frac{-|x|^2}{4t}}
\end{align*} 
denotes the heat kernel on $\mathbb{R}^d$.  Here we use $BV(\mathbb{R}^d)$ to denote the set of $u \in L^1(\mathbb{R}^d)$ such that the distributional derivative $Du$ is a vector-valued Radon measure with finite total mass.
\end{lemma}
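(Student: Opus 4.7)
The plan is to prove this Hedberg-type pointwise inequality by representing the Riesz potential via the heat semigroup, splitting the resulting time integral at a well-chosen scale, and bounding each piece by one of the two maximal quantities on the right.

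The starting point is the subordination identity
\begin{align*}
I_\alpha = \frac{1}{\Gamma(\alpha/2)} \int_0^\infty t^{\alpha/2-1}\, p_t \, dt,
\end{align*}
which (up to the normalization constant $\gamma(\alpha)$) expresses $(-\Delta)^{-\alpha/2}$ as an average of the heat semigroup. Applied to the vector-valued finite measure $D\chi_E$, this gives
\begin{align*}
I_\alpha D\chi_E(x) = c_\alpha \int_0^\infty t^{\alpha/2-1}\, (p_t \ast D\chi_E)(x)\, dt
\end{align*}
almost everywhere, with absolute convergence once the splitting below is in place.

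Fix $T>0$ and split the time integral at $T$. On $0<t<T$ I bound the integrand directly:
\begin{align*}
|p_t \ast D\chi_E(x)| \le A(x) := \sup_{s>0}|p_s \ast D\chi_E(x)|,
\end{align*}
which contributes $\tfrac{2}{\alpha}\, A(x)\, T^{\alpha/2}$. On $t>T$ I use the fact that $\chi_E \in BV(\mathbb{R}^d)$ to transfer the derivative onto the kernel by integration by parts, $p_t \ast D\chi_E = (Dp_t) \ast \chi_E$, and estimate
\begin{align*}
|(Dp_t)\ast\chi_E(x)| = t^{-1/2}\, |t^{1/2} Dp_t \ast \chi_E(x)| \le t^{-1/2}\, B(x),
\end{align*}
where $B(x) := \sup_{s>0}|s^{1/2} Dp_s \ast \chi_E(x)|$. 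Since $\alpha<1$, the tail integral $\int_T^\infty t^{\alpha/2 - 3/2}\,dt$ converges and equals $\tfrac{2}{1-\alpha}\, T^{(\alpha-1)/2}$, giving a contribution of $\tfrac{2}{1-\alpha}\, B(x)\, T^{(\alpha-1)/2}$. Thus
\begin{align*}
|I_\alpha D\chi_E(x)| \le C_\alpha\bigl(A(x)\, T^{\alpha/2} + B(x)\, T^{(\alpha-1)/2}\bigr).
\end{align*}

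Optimizing in $T$ by equating the two terms (i.e., choosing $T^{1/2} \sim B(x)/A(x)$ on the set where both are positive and finite) collapses the right-hand side to $C\, A(x)^{1-\alpha} B(x)^{\alpha}$, which is the claimed bound. The cases $A(x)=0$ or $B(x)=\infty$ are degenerate and handled by letting $T\to 0$ or $T\to\infty$ in the displayed inequality.

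The main point requiring care, rather than a genuine obstacle, is the justification of the subordination representation and the integration-by-parts identity at the level of the measure $D\chi_E$: one approximates $\chi_E$ by smooth truncations/mollifications, applies the scalar identities, and passes to the limit using the finiteness of $|D\chi_E|(\mathbb{R}^d)$ together with the integrable decay of $p_t$ and $t^{1/2}Dp_t$ in $L^\infty$ for each fixed $t$. The restriction $\alpha<1$ is essential precisely to make the tail integral in the splitting converge.
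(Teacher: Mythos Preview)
Your proposal is correct and follows essentially the same approach as the paper: represent $I_\alpha D\chi_E$ via the heat-kernel subordination formula, split the time integral at a scale $r$, bound the small-time piece by $r^{\alpha/2}\sup_t|p_t\ast D\chi_E|$ and the large-time piece (after integrating by parts) by $r^{(\alpha-1)/2}\sup_t|t^{1/2}Dp_t\ast\chi_E|$, then optimize in $r$. The only minor difference is that the paper justifies the representation formula by testing against $\varphi\in C_c^\infty(\mathbb{R}^d)$ and applying Fubini, whereas you sketch an approximation argument; both are standard and adequate.
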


This a pointwise interpolation inequality in the spirit of that of V. Maz'ya and T. Shaposhnikova established in \cite[Lemma at p.~114]{MS} (see also Section 3 in \cite[Lemma 3.2]{Spector}), though its validity for functions of bounded variation avoids the technical difficulties of the fine properties of such functions one should pay attention to with the use of Hardy's inequality and the Hardy-Littlewood maximal function.

It is here that we deviate from the argument of \cite{Spector} and \cite{KrantzPelosoSpector}, in that from the inequality \eqref{interpolation} we extract two further inequalities:
\begin{align}
|I_\alpha D\chi_E| &\leq C \left(\sup_{t>0} |p_t \ast D \chi_E|\right)^{1-\alpha}  \label{local}\\
|I_\alpha D\chi_E| &\leq C \left(\sup_{t>0} |p_t \ast D \chi_E|+\sup_{t>0} |t^{1/2}Dp_t \ast \chi_E|\right). \label{global}
\end{align}
The former follows from inequality
\begin{align*}
|t^{1/2}Dp_t \ast \chi_E|(x) \leq \|t^{1/2}Dp_t\|_{L^1(\mathbb{R}^d;\mathbb{R}^d)} \|\chi_E\|_{L^\infty(\mathbb{R}^d)}
\end{align*}
and the fact that $\|t^{1/2}Dp_t\|_{L^1(\mathbb{R}^d;\mathbb{R}^d)}$ is bounded uniformly in $t$, while the latter is just Young's inequality.  

The key insight one gains from this splitting is that if one works directly with the Lorentz quasi-norm, the estimate \eqref{local} should be utilized for large values of $t$, which corresponds to $x$ near the boundary of $E$.  Meanwhile, the estimate \eqref{global} should be utilized for small values of $t$, which corresponds to $x$ far away (from the boundary of $E$).  Putting these two estimates together we obtain
\begin{lemma}\label{lemma2}
Let $d\geq 2$ and $\alpha \in (0,1)$.  There exists a constant $C=C(\alpha,d)>0$ such that
\begin{align*}
\|I_\alpha D\chi_E\|_{L^{d/(d-\alpha),1}(\mathbb{R}^d;\mathbb{R}^d)} \leq C Per(E)^{1-\alpha} |E|^{\alpha(1-1/d)}.
\end{align*}
for all $\chi_E \in BV(\mathbb{R}^d)$, where
\begin{align*}
Per(E):=|D\chi_E|(\mathbb{R}^d).
\end{align*}
\end{lemma}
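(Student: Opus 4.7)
The plan is to combine the pointwise interpolation of Lemma 1 with weak-type bounds on the associated heat-semigroup maximal functions, and then split the Lorentz quasi-norm integral at a well-chosen threshold. I write
\[
F(x) := \sup_{t>0}|p_t \ast D\chi_E|(x), \qquad G(x) := \sup_{t>0}|t^{1/2}Dp_t \ast \chi_E|(x),
\]
so that Lemma 1 reads $|I_\alpha D\chi_E| \leq C F^{1-\alpha} G^\alpha$. Since $p_t$ is a positive approximate identity and $t^{1/2} Dp_t$ has a radially decreasing integrable majorant whose $L^1$ norm is independent of $t$, one has $F \leq C M|D\chi_E|$ and $G \leq C M\chi_E$ pointwise, where $M$ is the Hardy-Littlewood maximal function. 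The weak type $(1,1)$ of $M$ then gives
\[
\|F\|_{L^{1,\infty}(\mathbb{R}^d)} \leq C\,Per(E), \qquad \|G\|_{L^{1,\infty}(\mathbb{R}^d)} \leq C|E|.
\]

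The key step will be to produce two different distribution-function bounds on $u := I_\alpha D\chi_E$. From the local estimate \eqref{local} combined with the weak-$L^1$ bound on $F$ one gets at once $|\{|u|>t\}| \leq C\,Per(E)\,t^{-1/(1-\alpha)}$, which is sharp for large $t$. For small $t$ I will instead return to the full multiplicative inequality: for any $\lambda_1,\lambda_2 > 0$ with $\lambda_1^{1-\alpha}\lambda_2^{\alpha} \geq t/C$ one has $\{|u|>t\} \subseteq \{F>\lambda_1\} \cup \{G>\lambda_2\}$, so the weak-$L^1$ bounds above give $|\{|u|>t\}| \leq C\,Per(E)/\lambda_1 + C|E|/\lambda_2$. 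Minimizing the right-hand side under the multiplicative constraint by a weighted arithmetic-geometric mean argument (equivalently, a Lagrange multiplier) will produce the mixed weak-$L^1$ bound
\[
|\{|u|>t\}| \leq C\,Per(E)^{1-\alpha}|E|^{\alpha}\,t^{-1}.
\]

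With these two bounds in hand I split
\[
\|u\|_{L^{d/(d-\alpha),1}(\mathbb{R}^d;\mathbb{R}^d)} = \int_0^{t^*}|\{|u|>t\}|^{(d-\alpha)/d}\,dt + \int_{t^*}^{\infty}|\{|u|>t\}|^{(d-\alpha)/d}\,dt,
\]
using the mixed bound on $(0,t^*)$ and the local bound on $(t^*,\infty)$. The first integrand becomes a multiple of $t^{-(d-\alpha)/d}$, integrable at $0$, and contributes $C\,(Per(E)^{1-\alpha}|E|^{\alpha})^{(d-\alpha)/d}(t^*)^{\alpha/d}$; the second integrand becomes a multiple of $t^{-(d-\alpha)/(d(1-\alpha))}$, integrable at $\infty$ precisely because the hypothesis $d\geq 2$ forces $(d-\alpha)/(d(1-\alpha)) > 1$, and contributes $C\,Per(E)^{(d-\alpha)/d}(t^*)^{-\alpha(d-1)/(d(1-\alpha))}$. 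Choosing $t^* := (Per(E)/|E|)^{1-\alpha}$ equates the two contributions and, after a short collapse of exponents, produces exactly the asserted bound $C\,Per(E)^{1-\alpha}|E|^{\alpha(d-1)/d}$.

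The hard part will be obtaining the mixed weak-$L^1$ bound. That is the one place where the proof genuinely exploits the multiplicative structure of Lemma 1 rather than either of \eqref{local}, \eqref{global} in isolation, and the weighted AM-GM argument is what produces the sharp exponents $1-\alpha$ on $Per(E)$ and $\alpha$ on $|E|$. Everything else is an arithmetic power-counting, and $d\geq 2$ is used only to make the large-$t$ tail of the local bound converge.
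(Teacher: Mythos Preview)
Your argument is correct (modulo one harmless slip: the constraint for the inclusion $\{|u|>t\}\subseteq\{F>\lambda_1\}\cup\{G>\lambda_2\}$ should read $\lambda_1^{1-\alpha}\lambda_2^{\alpha}\leq t/C$, not $\geq$; since you optimize on the boundary this does not affect anything). The exponent computations at the end check out, and $d\geq 2$ is used exactly where you say.

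The paper takes a slightly different route. Rather than exploiting the full multiplicative inequality of Lemma~\ref{lemma1} to derive your mixed weak-type bound $|\{|u|>t\}|\leq C\,Per(E)^{1-\alpha}|E|^{\alpha}/t$, it works only with the two cruder consequences \eqref{local} and \eqref{global}. It splits the Lorentz integral at the fixed threshold $t=1$, uses \eqref{global} and the weak-$(1,1)$ bounds to control the low-$t$ piece by $C\,(Per(E)+|E|)^{1-\alpha/d}$, and uses \eqref{local} for the high-$t$ piece. This yields the additive inequality
\[
\|I_\alpha D\chi_E\|_{L^{d/(d-\alpha),1}}\leq C\bigl(Per(E)^{1-\alpha/d}+|E|^{1-\alpha/d}\bigr),
\]
which is then upgraded to the multiplicative form by applying it to the dilated sets $E_t=\{x:tx\in E\}$ and optimizing in $t$. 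In effect, the paper trades your two optimizations (over $\lambda_1,\lambda_2$ and over $t^*$) for a single scaling argument at the end. Your approach is more direct and never leaves the multiplicative world; the paper's is a bit more modular, since it only needs the weaker pointwise bounds \eqref{local}, \eqref{global} and delegates the homogeneity to dilation invariance.
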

The deduction of \eqref{replacement2improved} then proceeds as in \cite{Spector}, which amounts to firstly apply the coarea formula and the classical isoperimetric inequality, as in the classical work of H. Federer and W. Fleming \cite[Remark 6.6 on p.~487]{FF} and V. Maz'ya \cite[Equation (6) and Theorem 6]{mazya1960} (see also the more recent exploration by V. Maz'ya of the coarea formula in such inequalities \cite[Corollary 6.2 and Example 6.4; Theorem 7.1 and Remark 7.2]{mazya}), and secondly to invoke the boundedness of the Riesz transforms on $L^{p,q}(\mathbb{R}^d)$ for $1<p<+\infty$ and $1\leq q \leq +\infty$.    

Returning to the second purpose of this paper, let us recall the historical progression of Sobolev inequalities in the $L^1$ regime.  The most classical of these results is due to E. Gagliardo \cite{Gagliardo} and L. Nirenberg \cite{Nirenberg}, whose work shows the validity of the inequality
\begin{align}\label{gn}
\|u \|_{L^{d/(d-1)}(\mathbb{R}^d)} \leq C  \int_{\mathbb{R}^d} |\nabla u(x) | \;dx
\end{align}
for all $u \in W^{1,1}(\mathbb{R}^d)$.  A strengthening of this inequality on the Lorentz scale was subsequently obtained by A. Alvino \cite{alvino}, who proved the inequality
\begin{align}\label{aa}
\|u \|_{L^{d/(d-1),1}(\mathbb{R}^d)} \leq C \int_{\mathbb{R}^d} |\nabla u(x) | \;dx
\end{align} 
for all $u \in W^{1,1}(\mathbb{R}^d)$.  At approximately the same time, N. Meyer and W.P. Ziemer \cite{Meyers-Ziemer-1977} proved an inequality which contains \eqref{gn}, \eqref{aa}, and even Hardy's inequality
\begin{align}\label{hardy}
\int_{\mathbb{R}^d} \frac{|u(x)|}{|x|}\;dx \leq C  \int_{\mathbb{R}^d} |\nabla u(x) | \;dx
\end{align}
for all $u \in W^{1,1}(\mathbb{R}^d)$.  Precisely, in \cite[Theorem 4.7]{Meyers-Ziemer-1977} they proved the validity of the inequality
\begin{align}\label{replacement2optimal}
\int_{\mathbb{R}^d} |u^*|\;d\mu \leq C  \int_{\mathbb{R}^d} |\nabla u | \;dx\end{align}
for all $u \in W^{1,1}(\mathbb{R}^d)$ (and even $BV(\mathbb{R}^d)$) and all non-negative Radon measures $\mu$ such that $\mu(B(x,r)) \leq C' r^{d-1}$ for all $x \in \mathbb{R}^d$ and $r>0$, for some constant $C'>0$.  Here we use $u^*$ to denote the precise representative of $u$,
\begin{align*}
u^*(x):= \lim_{\epsilon \to 0} \fint_{B(x,\epsilon)} u(y)\;dy,
\end{align*}
which is defined up to a set of $\mathcal{H}^{d-1}$ measure zero, which follows by D. Adams' strong-type estimate for the Hardy-Littlewood maximal function \cite{Adams:1988}.

As was observed in \cite{PonceSpector2,Spector}, various choices of $\mu$ yield \eqref{gn}, \eqref{aa}, and \eqref{hardy}, so that \eqref{replacement2optimal} is a sort of master inequality.  The other implications are as follows.  The Lorentz space estimate \eqref{aa} implies \eqref{hardy} by H\"older's inequality on the Lorentz scale, while one deduces \eqref{gn} from \eqref{aa} via the inequality
\begin{align*}
\|u \|_{L^{d/(d-1)}(\mathbb{R}^d)} \leq C \|u \|_{L^{d/(d-1),1}(\mathbb{R}^d)}.
\end{align*}
Meanwhile the left-hand-side of \eqref{aa} is equivalent to the left-hand-side of \eqref{hardy} for non-negative, radially decreasing functions (see e.g. Lemma 4.3 in \cite{FrankSeiringer}), and so if one assumes a Poly\'a-Szeg\"o inequality has been established then a symmetrization argument shows \eqref{hardy} implies \eqref{aa}.  Finally, if one assumes a coarea formula has been established then \eqref{gn} implies \eqref{aa}.  This can be summarized as the following graphic:

\begin{tikzpicture}
\matrix [column sep=7mm, row sep=5mm] {
  \node (se)  {}; &
  \node (yw) [draw, shape=rectangle] {Trace Inequality}; &
  \node (ul)  {}; \\
  &  \node (ec) [draw, shape=rectangle] {Lorentz Space Inequality};&
  \node (d2) {}; \\
  \node (pu) [draw, shape=rectangle] {Lesbesgue Scale Inequality};&
   \node (d1) {}; &
  \node (we) [draw, shape=rectangle] {Hardy's Inequality};   \\
};

\draw[->, thick] (yw) edge[bend left] (we);
\draw[->, thick] (ec) -- (pu);
\draw[->, thick] (yw) edge[bend right] (pu);
\draw[->, thick] (yw) --(ec);
\draw[->, thick] (ec) --(we);
\draw[->, thick] (we) edge[bend left]  node[sloped, anchor=center, below, text width=4.0cm] { Poly\'a-Szeg\"o Inequality}(ec);
\draw[->, thick] (pu) edge[bend right]  node[sloped, anchor=center, below, text width=3.0cm] { Coarea Formula} (ec);
\end{tikzpicture}

We can now compare the known inequalities in the fractional regime.  The analogue of \eqref{gn} is \eqref{replacement2improvement}, proved in \cite{SSVS} - while the inequality is proved for $C^\infty_c(\mathbb{R}^d)$ functions, the definition of the fractional gradient shows such functions are curl free in the sense of distributions and therefore A. Bonami and S. Poornima's approximation argument \cite{BonamiPoornima} can be used to deduce the general case (see the argument on p.~16 in \cite{Spector}).  The analogue of \eqref{aa} is \eqref{replacement2improved}, proved in \cite{Spector}.  In particular, in Theorem 1.4 in \cite{Spector}, the inequality \eqref{replacement2improved} is established in the argument of the analogue for the fractional gradient of the inequality \eqref{hardy}.  Thus, a natural question is whether one has a stronger inequality in an analogue of \eqref{replacement2optimal}.  An answer to this question in the negative is given in
\begin{theorem}\label{cor}
Let $\alpha \in (0,1)$.  There is no universal constant $C=C(\alpha,d)>0$ such that
\begin{align}\label{false}
\int_{\mathbb{R}^d} |u|\;d\mu \leq C \int_{\mathbb{R}^d} |D^\alpha u|
\end{align}
for all $u \in L^q(\mathbb{R}^d) \cap C(\mathbb{R}^d)$ for some $1\leq q<d/(1-\alpha)$ such that $D^\alpha u \in L^1(\mathbb{R}^d;\mathbb{R}^d)$ and all non-negative Radon measures $\mu$ such that $\mu(B(x,r)) \leq C' r^{d-\alpha}$ for all $x \in \mathbb{R}^d$ and $r>0$, for some constant $C'>0$.
\end{theorem}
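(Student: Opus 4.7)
I plan to argue by contradiction, converting the supposed trace inequality into an estimate for the Riesz transforms which then contradicts the companion result (advertised in the abstract) on the failure of even a weak-type bound for the Riesz transforms on $L^1(\mathcal{H}^{d-\beta}_\infty)$ for some $\beta \in [1,d)$.

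Suppose \eqref{false} holds with a universal $C = C(\alpha,d)$. I would first test on functions of the form $u = I_\alpha f$ for scalar $f$ chosen nice enough (e.g.\ Schwartz with vanishing mean) that $u$ lies in the hypothesized class $L^q \cap C$ for some $1 \leq q < d/(1-\alpha)$. Using the distributional computation $I_{1-\alpha} I_\alpha f = I_1 f$ already performed in the introduction, one finds
\begin{align*}
D^\alpha u \;=\; \nabla I_1 f \;=\; -R f,
\end{align*}
the vector of Riesz transforms of $f$. Hence the putative \eqref{false} reduces to
\begin{align*}
\int_{\mathbb{R}^d} |I_\alpha f| \, d\mu \;\leq\; C\, \|Rf\|_{L^1(\mathbb{R}^d;\mathbb{R}^d)}
\end{align*}
for every non-negative Radon measure $\mu$ with $\mu(B(x,r)) \leq C' r^{d-\alpha}$.

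Next, I would exploit the Fubini-type identity $\int I_\alpha f\, d\mu = \int f \cdot I_\alpha \mu\, dx$ together with the standard fact that Frostman-type $\mu$ yield $I_\alpha \mu \in L^\infty$. Substituting $g = R f$ and using $\sum_j R_j^2 = -I$ to recover $f = -\sum_j R_j g_j$, this may be recast as a bound on a composition of Riesz transforms and Riesz potentials, tested against Frostman measures. A careful choice of $\mu$ adapted to a sequence $\{f_n\}$ witnessing the Riesz-transform failure on $L^1(\mathcal{H}^{d-\beta}_\infty)$ should then allow one to produce a violating sequence for \eqref{false}; I would expect the sequence itself to be a rescaled/thickened version of the Riesz-transform counterexample, with the Hausdorff content-$\mathcal{H}^{d-\beta}_\infty$ bad set thickened into an $r^{d-\alpha}$-Frostman measure by exploiting the smoothing effect of $I_\alpha$.

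Finally, I would invoke the Riesz-transform failure: a sequence $f_n$ with $\|f_n\|_{L^1(\mathcal{H}^{d-\beta}_\infty)} = 1$ and levels $t_n$ such that $t_n\, \mathcal{H}^{d-\beta}_\infty\{|R f_n|>t_n\} \to \infty$. Tracing through the above reduction produces the required unbounded ratio, contradicting the assumed universal constant. The main obstacle I anticipate is precisely bridging the mismatch in exponents: the trace inequality sits at the Frostman scale $d-\alpha \in (d-1,d)$, while the Riesz-transform failure is presently known only for $d-\beta \in (0,d-1]$. Closing this gap is what forces a judicious choice of $\mu$ and test family, and is almost certainly where the real work of the proof lies; the rest is formal manipulation of Riesz potentials and transforms.
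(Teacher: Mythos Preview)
Your overall plan---reduce by contradiction to a statement about Riesz transforms and then invoke Lemma~\ref{thm2}---is the paper's plan too, but the reduction you sketch does not close. After writing $u=I_\alpha f$ and $D^\alpha u=Rf$, the putative inequality reads $\int|I_\alpha f|\,d\mu\le C\|Rf\|_{L^1(\mathbb{R}^d)}$; the right-hand side is a \emph{Lebesgue} $L^1$-norm, so there is nowhere to insert a hypothesis of the form $\|f_n\|_{L^1(\mathcal H^{d-\beta}_\infty)}=1$, and the substitution $g=Rf$, $f=-\sum_jR_jg_j$ merely reinserts Riesz transforms on the left without progress. More seriously, the exponent mismatch you flag in your last paragraph is not a residual technicality but the whole content of the argument: as written you have no mechanism to pass from the Frostman scale $d-\alpha\in(d-1,d)$ to the scale $d-1$ where Lemma~\ref{thm2} operates.

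The paper resolves this gap by making both the test function and the measure completely explicit. One takes $u=(-\Delta)^{(1-\alpha)/2}(\chi_Q\ast\rho_n)$ (which is of your form $I_\alpha f$ with $f=(-\Delta)^{1/2}(\chi_Q\ast\rho_n)$), so that $D^\alpha u=D(\chi_Q\ast\rho_n)$ and the right-hand side is bounded uniformly by $|D\chi_Q|(\mathbb{R}^d)$. For the measure one takes $\mu=I_{1-\alpha}\mu_s$, where $\mu_s=\mathcal H^{d-1}\restriction Q'_s$ is the $(d-1)$-dimensional measure on a slice just below $Q$; Adams' result guarantees this $\mu$ satisfies the $(d-\alpha)$-Frostman condition, and Fubini gives $\int|u|\,d\mu=\int I_{1-\alpha}|u|\,d\mu_s$. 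The pointwise bound $|DI_1(\chi_Q\ast\rho_n)|\le c\,I_{1-\alpha}|u|$ (from the identity $I_1=I_{2-\alpha}(-\Delta)^{(1-\alpha)/2}$) then yields $\int|DI_1(\chi_Q\ast\rho_n)|\,d\mu_s\le C'|D\chi_Q|(\mathbb{R}^d)$, and after $n\to\infty$ this contradicts Lemma~\ref{thm2} via Chebyshev. Thus the operator $I_{1-\alpha}$, applied to the \emph{measure}, is precisely the bridge between dimensions that your proposal was missing.
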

\begin{remark}
In the recent paper of F. Gmeineder, B. Rai\cb{t}\v{a}, and J. Van Schaftingen \cite{GRV}, certain weaker trace inequalities have been established for a wide class of first order linear homogeneous differential operators.  As a consequence of Theorem \ref{cor} one deduces the impossibility of an improvement of their results to the optimal result known for the gradient for $\alpha \in (0,1)$.
\end{remark}

As developed in the work of D. Adams \cite[Proposition 1 on p.~118]{Adams:1988}, the validity the inequality \eqref{false} is equivalent to the validity of
\begin{align*}
\int_0^\infty \mathcal{H}^{d-\alpha}_\infty\left(\{ |u|>t\}\right)\;dt \leq C \int_{\mathbb{R}^d} |D^\alpha u|
\end{align*}
for all $u \in L^q(\mathbb{R}^d) \cap C(\mathbb{R}^d)$ such that $D^\alpha u \in L^1(\mathbb{R}^d;\mathbb{R}^d)$, where $\mathcal{H}^{d-\alpha}_\infty$ is the Hausdorff content, defined for compact sets $K \subset \mathbb{R}^d$ by
\begin{align}\label{hausdorffcontent}
\mathcal{H}^{d - \alpha}_\infty(K) 
:= \inf\biggl\{ \sum_{i=0}^\infty{\omega_{d-\alpha}r_i^{d - \alpha}} : K \subset \bigcup_{i=0}^\infty B(x_i,r_i) \biggr\},
\end{align}
where $\omega_{d-\alpha} := {\pi^{(d-\alpha)/2}}/{\Gamma\left(\frac{d-\alpha}{2}+1\right)}$.  For general sets one then extends by regularity (see \cite[Section 2]{PonceSpector2}).  In particular, Theorem \ref{cor} shows that the fractional gradient does not admit a trace inequality/Hausdorff content estimate.

At the heart of the construction in our proof of Theorem \ref{cor} one can observe a phenomena of independent interest, which is the lack of a weak-type bound for the Riesz transform with respect to the Hausdorff content.  Here we recall that in \cite{Adams:1988} D. Adams proved that for $\beta \in (0,d)$, the Hardy-Littlewood maximal function is bounded on the space $L^1(\mathcal{H}^{d-\beta}_\infty)$, i.e. there exists a constant $C>0$ such that the inequality
\begin{align*}
\int_0^\infty \mathcal{H}^{d-\beta}_\infty\left(\{ \mathcal{M}(u)>t\}\right)\;dt \leq C \int_0^\infty \mathcal{H}^{d-\beta}_\infty\left(\{ |u|>t\}\right)\;dt 
\end{align*}
holds for all $u \in L^1(\mathcal{H}^{d-\beta}_\infty)$ (which is defined as the completion of continuous functions with compact support with respect to the functional on the right hand side of the preceding).  One might wonder whether a similar inequality holds for the Riesz transform, or even the weaker estimate
\begin{align}\label{weakRiesz}
\mathcal{H}^{d-\beta}_\infty(\{ |Ru|>t\})  \leq \frac{C}{t} \int_0^\infty \mathcal{H}^{d-\beta}_\infty\left(\{ |u|>t\}\right)\;dt,
\end{align}
where $Ru=DI_1u$ is the vector Riesz transform.  When $\beta \in [1,d)$, we obtain that no such inequality is possible as a consequence of
\begin{lemma}\label{thm2}
Let $\beta \in [1,d)$.  If $Q$ denotes the cube $[0,1]^d$, then
\begin{align*}
\sup_{t>0} t \mathcal{H}^{d-\beta}_\infty(\{ |D I_1\chi_Q|>t\}) = +\infty.
\end{align*}
\end{lemma}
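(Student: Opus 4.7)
My plan is to locate, for each level $t$, an explicit subset of $\{|DI_1\chi_Q|>t\}$ whose $(d-\beta)$-dimensional Hausdorff content is bounded below by a positive constant independent of $t$. The mechanism is that $|DI_1\chi_Q|$ blows up logarithmically as one approaches the relative interior of a face of $Q$ from outside, so that the level set contains an exponentially thin but laterally macroscopic slab adjacent to a $(d-1)$-dimensional portion of $\partial Q$. Since $\beta\ge 1$ forces $d-\beta\le d-1$, such a slab will retain a non-degenerate Hausdorff content.

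The first step is to rewrite $DI_1\chi_Q$ as a single-layer potential. For $x\notin Q$ the kernel $y\mapsto |x-y|^{-(d-1)}$ is smooth on $\overline Q$, so differentiating under the integral sign defining $I_1\chi_Q$, transferring the gradient onto the $y$-variable (which flips its sign), and applying the divergence theorem yields
\[
DI_1\chi_Q(x)\;=\;-\frac{1}{\gamma(1)}\int_{\partial Q}\frac{\nu(y)}{|x-y|^{d-1}}\,dS(y),
\]
where $\nu$ denotes the outward unit normal. Fixing the face $F_0=\{0\}\times[0,1]^{d-1}$ and the compact subset $K:=[1/4,3/4]^{d-1}$ of its relative interior, for $x=(-\epsilon,x')$ with $x'\in K$ and $\epsilon>0$ small each of the remaining $2d-1$ faces sits at distance $\gtrsim 1$ from $x$ and contributes only an $O(1)$ amount. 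For the face $F_0$, a polar-coordinate computation in the integral $\int_{[0,1]^{d-1}}(\epsilon^2+|x'-y'|^2)^{-(d-1)/2}\,dy'$ produces a lower bound of the form $c_1\log(1/\epsilon)-C$, uniformly in $x'\in K$. Hence $|DI_1\chi_Q(x)|\ge c_1\log(1/\epsilon)-C$, and for every $t$ sufficiently large the slab
\[
S_t\;:=\;\bigl\{(-\epsilon,x'):0<\epsilon<\delta_t,\ x'\in K\bigr\},\qquad \delta_t:=e^{-(t+C)/c_1},
\]
lies inside $\{|DI_1\chi_Q|>t\}$.

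To finish I would bound $\mathcal H^{d-\beta}_\infty(S_t)$ from below by a constant independent of $t$. Under the $1$-Lipschitz projection $\pi\colon (x_1,x')\mapsto x'$, any cover $\{B(x_i,r_i)\}$ of $S_t$ projects to a cover of $\pi(S_t)=K\subset\mathbb{R}^{d-1}$ by the balls $\{B(x_i',r_i)\}$ of the same radii. The hypothesis $\beta\ge 1$ gives $d-\beta\le d-1=\dim_H K$, so $K$ has strictly positive Hausdorff content at dimension $d-\beta$, and comparison of covers yields
\[
\omega_{d-\beta}\sum_i r_i^{d-\beta}\;\ge\;\mathcal H^{d-\beta}_\infty(K)\;>\;0.
\]
Taking the infimum over covers produces $\mathcal H^{d-\beta}_\infty(S_t)\ge c_2>0$ uniformly in $t$, whence $t\,\mathcal H^{d-\beta}_\infty(\{|DI_1\chi_Q|>t\})\ge c_2\,t\to\infty$. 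I expect the main obstacle to be the uniform logarithmic lower bound for the layer potential over $F_0$; once the $O(1)$ contributions of the other faces are absorbed into the constant $C$, it collapses to the routine one-variable estimate $\int_0^{1/2}r^{d-2}(\epsilon^2+r^2)^{-(d-1)/2}\,dr\gtrsim\log(1/\epsilon)$.
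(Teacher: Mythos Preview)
Your proposal is correct and follows the same geometric idea as the paper: exploit the logarithmic blow-up of the single-layer potential $DI_1\chi_Q$ as one approaches the relative interior of a face of $Q$ from outside, so that every superlevel set contains a ``slab'' near that face with macroscopic $(d-1)$-dimensional extent.

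Where you and the paper diverge is in the bookkeeping of the Hausdorff-content lower bound. The paper first reduces to $\beta=1$ via the concavity inequality
\[
\bigl(\mathcal{H}^{d-1}_\infty(A)\bigr)^{(d-\beta)/(d-1)}\le \mathcal{H}^{d-\beta}_\infty(A),
\]
then fixes a single slice $Q'_s=[0,1]^{d-1}\times\{-s\}$ and invokes Adams' Frostman-type comparison (his property~P5) between the $(d-1)$-dimensional surface measure on $Q'_s$ and $\mathcal{H}^{d-1}_\infty$ to get the lower bound. You instead work directly at exponent $d-\beta$ for general $\beta\in[1,d)$ by projecting any cover of your slab $S_t$ onto $K=[1/4,3/4]^{d-1}$ under the $1$-Lipschitz map $(x_1,x')\mapsto x'$ and using $\mathcal{H}^{d-\beta}_\infty(K)>0$. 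Your route is somewhat more self-contained (no external reduction lemma, no Frostman comparison cited), while the paper's route reuses machinery that has already been set up in the referenced works; either way the analytic core --- the logarithmic estimate for $\int_{[0,1]^{d-1}}(\epsilon^2+|x'-y'|^2)^{-(d-1)/2}\,dy'$ away from the edges --- is identical. Your choice to restrict to the compact subcube $K$ rather than the full face $[0,1]^{d-1}$ is a sensible precaution that makes the $O(1)$ bound on the other faces genuinely uniform.
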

Indeed, $\chi_Q \in L^1(\mathcal{H}^{d-\beta}_\infty)$ for any $\beta \in (0,d)$, while the preceding shows that the weak-type quasi-norm of its Riesz transform is unbounded for $\beta \in [1,d)$ and so for such $\beta$ one cannot have \eqref{weakRiesz}.  This motivates
\begin{openproblem}
For $\beta \in (0,1)$, can one find $u \in L^1(\mathcal{H}^{d-\beta}_\infty)$ such that
\begin{align*}
\sup_{t>0} t \mathcal{H}^{d-\beta}_\infty(\{ |D I_1u|>t\}) = +\infty?
\end{align*}
\end{openproblem}

In the next section we prove the main results of this paper.  We first prove Lemma \ref{lemma1} and Lemma \ref{lemma2}.  We then proceed to prove Lemma \ref{thm2}, before returning to prove Theorem \ref{cor}, as the details in the construction of the former will be useful in proving the latter.

\section{Proofs of the Main Results}\label{mainresults}
We begin this section with the proof of Lemma \ref{lemma1}.  

\begin{proof}[Proof of Lemma \ref{lemma1}]
We first claim that for Lebesgue almost every $x \in \mathbb{R}^d$ one has the equality
\begin{align}\label{gaussianrepresentation}
I_\alpha D\chi_E(x) &= \frac{1}{\Gamma(\alpha/2)} \int_0^\infty t^{\alpha/2-1} p_t \ast D\chi_E \;dt.
\end{align}

Indeed, one can check by Fourier transform that for $\varphi \in C^\infty_c(\mathbb{R}^d)$ one has
\begin{align*}
I_\alpha \varphi (x) = \frac{1}{\Gamma(\alpha/2)} \int_0^\infty t^{\alpha/2-1} p_t \ast \varphi \;dt.
\end{align*}
From this and several applications of Fubini's theorem we find
\begin{align*}
\int_{\mathbb{R}^d}  I_\alpha D\chi_E \;\varphi \;dx &=  \frac{1}{\Gamma(\alpha/2)} \int_{\mathbb{R}^d}  \int_0^\infty t^{\alpha/2-1} p_t \ast \varphi \;dt\;dD\chi_E \\
&=\frac{1}{\Gamma(\alpha/2)}\int_0^\infty t^{\alpha/2-1}  \int_{\mathbb{R}^d}   p_t \ast D\chi_E  \;\varphi \;dx \;dt \\
&= \frac{1}{\Gamma(\alpha/2)} \int_{\mathbb{R}^d} \int_0^\infty t^{\alpha/2-1}    p_t \ast D\chi_E   \;dt  \;\varphi \;dx.
\end{align*}
As this equality holds for all $\varphi \in C^\infty_c(\mathbb{R}^d)$, we deduce the equality \eqref{gaussianrepresentation}.

We now use the representation \eqref{gaussianrepresentation} to estimate as follows
\begin{align*}
I_\alpha D\chi_E(x) &= \frac{1}{\Gamma(\alpha/2)} \int_0^\infty t^{\alpha/2-1} p_t \ast D\chi_E \;dt \\
&= \frac{1}{\Gamma(\alpha/2)} \int_0^r t^{\alpha/2-1} p_t \ast D\chi_E \;dt+\frac{1}{\Gamma(\alpha/2)} \int_r^\infty t^{\alpha/2-1} p_t \ast D\chi_E \;dt\\
&=: I(r)+II(r).
\end{align*}
For $I(r)$, we have
\begin{align*}
|I(r)| &= \left|\frac{1}{\Gamma(\alpha/2)} \int_0^r t^{\alpha/2-1} p_t \ast D\chi_E \;dt\right|\\
&\leq \frac{1}{\Gamma(\alpha/2)} \sup_{t>0} |p_t \ast D\chi_E| \int_0^r t^{\alpha/2-1}  \;dt \\
&= \frac{1}{\Gamma(\alpha/2+1)} r^{\alpha/2} \sup_{t>0} |p_t \ast D\chi_E|.
\end{align*}
Meanwhile, for $II(r)$, we integrate by parts in the convolution to obtain
\begin{align*}
|II(r)| &= \left|\frac{1}{\Gamma(\alpha/2)} \int_r^\infty t^{\alpha/2-1} Dp_t \ast \chi_E \;dt \right| \\
&\leq \frac{1}{\Gamma(\alpha/2)} \sup_{t>0} |t^{1/2}Dp_t \ast \chi_E| \int_r^\infty t^{\alpha/2-1/2-1}  \;dt \\
&=\frac{1}{\Gamma(\alpha/2)} \sup_{t>0} |t^{1/2}Dp_t \ast \chi_E|  \frac{r^{\alpha/2-1/2}}{1/2-\alpha/2}
\end{align*}
One can then optimize in $r$, though setting the upper bounds for $I(r)$ and $II(r)$ is sufficient for our purposes, from which we obtain
\begin{align*}
|I_\alpha D\chi_E(x)| \leq C \left(\sup_{t>0} |p_t \ast D\chi_E| \right)^{1-\alpha} \left(\sup_{t>0} |t^{1/2}Dp_t \ast \chi_E| \right)^\alpha
\end{align*}
with
\begin{align*}
C= 2 \frac{1}{\Gamma(\alpha/2+1)^{1-\alpha}} \left(\frac{1}{\Gamma(\alpha/2)} \frac{1}{1/2-\alpha/2}\right)^{\alpha}.
\end{align*}

\end{proof}

We next establish the validity of Lemma \ref{lemma2}
 
\begin{proof}[Proof of Lemma \ref{lemma2}]
We have
\begin{align*}
\| I_\alpha D\chi_E\|_{L^{d/(d-\alpha),1}(\mathbb{R}^d;\mathbb{R}^d)} &= \int_0^\infty |\{ |I_\alpha D\chi_E|>s\}|^{(d-\alpha)/d} \;ds \\
&=\int_0^1 |\{ |I_\alpha D\chi_E|>s\}|^{(d-\alpha)/d} \;ds + \int_1^\infty |\{ |I_\alpha D\chi_E|>s\}|^{(d-\alpha)/d} \;ds \\
&=: A+B.
\end{align*}
For $A$, we utilize the estimate \eqref{global}, the inequality
\begin{align*}
|\{\sup_{t>0} |p_t \ast D \chi_E|+\sup_{t>0} |t^{1/2}Dp_t \ast \chi_E|>\frac{s}{C}\}|&\leq  |\{\sup_{t>0} |p_t \ast D \chi_E|>\frac{s}{2C}\}|\\
&\;\;+|\{\sup_{t>0} |t^{1/2}Dp_t \ast \chi_E|>\frac{s}{2C}\}|,
\end{align*}
and the weak-type $(1,1)$ estimates for the Hardy-Littlewood maximal function (note Theorem 2 on p.~62 in \cite{Sharmonic} implies that both of these maximal functions associated to the heat kernel can be controlled above pointwise by a constant times the Hardy-Littlewood maximal function) to obtain
\begin{align*}
A &\leq C' \left(Per(E)+ |E|\right)^{1-\alpha/d} \int_0^1 \frac{1}{s^{1-\alpha/d}}\;ds \\
&=C'' \left(Per(E)+ |E|\right)^{1-\alpha/d}.
\end{align*}
Meanwhile, for $B$, the inequality \eqref{local}, Theorem 2 in \cite{Sharmonic}, and the weak-type $(1,1)$ estimate for the Hardy-Littlewood maximal function imply
\begin{align*}
B &\leq \int_1^\infty \left(\tilde{C} \frac{Per(E)}{s^{1/(1-\alpha)}}\right)^{(d-\alpha)/d} \;ds \\
&=\tilde{C}' Per(E)^{1-\alpha/d}.
\end{align*}
Putting these estimates together, and using subadditivity of the map $z \mapsto z^{1-\alpha/d}$, we find
\begin{align*}
\| I_\alpha D\chi_E\|_{L^{d/(d-\alpha),1}(\mathbb{R}^d;\mathbb{R}^d)} \leq C\left(Per(E)^{1-\alpha/d} +|E|^{1-\alpha/d}\right)
\end{align*}
The desired estimate then follows by an application of the inequality to the set $E_t:=\{x : tx \in E\}$ and an optimization in $t$.
\end{proof}

We next give the proof of Lemma \ref{thm2}.
\begin{proof}[Proof of Lemma \ref{thm2}]
First let us observe that a covering argument and concavity of the function (which holds because $\beta \in [1,d)$) $s \in [0,\infty) \mapsto s^{(d-\beta)/(d-1)}$ (see the proof of Corollary 1.5 and Remark 4.4 in \cite{PonceSpector2}) shows that
\begin{align*}
\left(\mathcal{H}^{d-1}_\infty \left(A\right)\right)^{(d-\beta)/(d-1)} \leq \mathcal{H}^{d-\beta}_\infty \left(A\right),
\end{align*}
and so it suffices to show the result for $\mathcal{H}^{d-1}_\infty$.

Thus we let $x=(x',x_d)$, $y=(y',y_d)$, and $\nu$ denote the unit exterior normal we first observe that
\begin{align*}
I_1 D\chi_Q(x) &= \int_{\partial Q} \frac{\nu(y)}{|x-y|^{d-1}}\;d\mathcal{H}^{d-1}(y) \\
&= \int_{\partial Q} \frac{\nu(y)}{\left(|x_d-y_d|^2+ |x'-y'|^2\right)^{(d-1)/2}}\;d\mathcal{H}^{d-1}(y)
\end{align*}
is smooth for $x \in \mathbb{R}^{d-1} \times \mathbb{R}^-$.  In particular, the fundamental theorem of calculus implies that for such $x$ one has
\begin{align*}
I_1(D\chi_Q)_d(x',x_d) &= \int_{\{\partial Q: y_d=1\} } \frac{1}{\left(|x_d-1|^2+ |x'-y'|^2\right)^{(d-1)/2}}\;d\mathcal{H}^{d-1}(y)\\
&\;\;-\int_{\{\partial Q: y_d=0\}} \frac{1}{\left(|x_d|^2+ |x'-y'|^2\right)^{(d-1)/2}}\;d\mathcal{H}^{d-1}(y) \\
&=\int_{Q'} \frac{1}{\left(|x_d-1|^2+ |x'-y'|^2\right)^{(d-1)/2}}\;dy'\\
&\;\;- \int_{Q'} \frac{1}{\left(|x_d|^2+ |x'-y'|^2\right)^{(d-1)/2}}\;dy',
\end{align*}
where we have used the notation $Q'=[0,1]^{d-1}$.  In particular for $x_d<0$ we have
\begin{align*}
\int_{Q'} \frac{1}{\left(|x_d-1|^2+ |x'-y'|^2\right)^{(d-1)/2}}\;dy' &\leq \int_{Q' }\int_{Q'} \frac{1}{|x_d-1|^{d-1}}\;dy' \\
&\leq |Q'|\\
&=1,
\end{align*}
and therefore
\begin{align*}
|I_1 D\chi_Q (x',x_d)| &\geq \int_{Q'} \frac{1}{\left(|x_d|^2+ |x'-y'|^2\right)^{(d-1)/2}}\;dy' - \int_{Q'} \frac{1}{\left(|x_d-1|^2+ |x'-y'|^2\right)^{(d-1)/2}}\;dy' \\
&\geq \int_{Q'} \frac{1}{\left(|x_d|^2+ |x'-y'|^2\right)^{(d-1)/2}}\;dy'-1.
\end{align*}

If $x' \in Q'$, then the change of variables $z'=\frac{x'-y'}{x_d}$ shows
\begin{align*}
\int_{Q'} \frac{1}{\left(|x_d|^2+ |x'-y'|^2\right)^{(d-1)/2}}\;dy' &= \int_{\frac{x'-Q'}{x_d}} \frac{1}{\left(1+ |z'|^2\right)^{(d-1)/2}}\;dz'\\
& \approx C\ln \frac{1}{|x_d|}
\end{align*}
for $|x_d|$ sufficiently small, and so in turn for such $x'$ we deduce the inequality
\begin{align*}
|I_1 D\chi_Q| \geq c\ln\left(\frac{1}{|x_d|}\right)
\end{align*}
for all $|x_d|$ sufficiently small.  

For $s \in (0,1)$, define $Q'_s:= [0,1]^{d-1}\times \{x_d=-s\}$ and $\mu_s:= \mathcal{H}^{d-1}\restriction Q'_s$.  Then P5 in \cite{Adams:1988} gives the inequality  
\begin{align*}
\mu_s(\{ |DI_1\chi_Q|>t\}) \leq C'\mathcal{H}^{d-1}_\infty(\{ |DI_1\chi_Q|>t\})
\end{align*}
for every $s \in (0,1)$.  But for each $t>0$, if $s<exp(-t/c)$ then
\begin{align*}
\{ |DI_1\chi_Q|>t\}\cap Q'_s = Q'_s
\end{align*}
and therefore
\begin{align*}
c'= \mathcal{H}^{d-1}\restriction Q'_s(Q'_s) \leq C'\mathcal{H}^{d-1}_\infty(\{ |DI_1\chi_Q|>t\}).
\end{align*}
Thus we have shown that
\begin{align*}
\frac{c'}{C'}t \leq t\mathcal{H}^{d-1}_\infty(\{ |DI_1\chi_Q|>t\}),
\end{align*}
and the claim of the Lemma follows.
\end{proof}

We conclude with the proof of Theorem \ref{cor}.
\begin{proof}[Proof of Theorem \ref{cor}]

Let $\{\rho_n\}_{n \in \mathbb{N}} \subset C^\infty_c(\mathbb{R}^d)$ be a sequence of standard mollifiers.  Then as $\chi_Q \ast \rho_n \in C^\infty_c(\mathbb{R}^d)$,   one has for every $x \in \mathbb{R}^d$ the equality
\begin{align*}
D I_1(\chi_Q \ast \rho_n) &= D I_{2-\alpha} ((-\Delta)^{(1-\alpha)/2}(\chi_Q \ast \rho_n)).
\end{align*}
From this formula and the semi-group property of the Riesz potentials and their inverses the fractional Laplacians, again using $\chi_Q \ast \rho_n \in C^\infty_c(\mathbb{R}^d)$, one obtains the estimate
\begin{align*}
|D I_1(\chi_Q \ast \rho_n)| &\leq c I_{1-\alpha} |(-\Delta)^{(1-\alpha)/2}(\chi_Q \ast \rho_n)|.
\end{align*}

For $s \in (0,1)$, define $Q'_s:= [0,1]^{d-1}\times \{x_d=-s\}$ and $\mu_s:= \mathcal{H}^{d-1}\restriction Q'_s$.  Then as $D I_1(\chi_Q \ast \rho_n), I_{1-\alpha} |(-\Delta)^{(1-\alpha)/2}(\chi_Q \ast \rho_n)|$ are continuous function on $Q'_s$, we obtain by integration
\begin{align*}
\int  |D I_1(\chi_Q \ast \rho_n)| \;d\mu_s \leq c \int I_{1-\alpha}|(-\Delta)^{(1-\alpha)/2}(\chi_Q \ast \rho_n)| \;d\mu_s.
\end{align*}
Next Fubini's theorem implies
\begin{align*}
\int  |D I_1(\chi_Q \ast \rho_n)| \;d\mu_s \leq c \int |(-\Delta)^{(1-\alpha)/2}(\chi_Q \ast \rho_n)| I_{1-\alpha}\mu_s.
\end{align*}
Now the fact that $\mu_s(B(x,r)) \leq C'r^{d-1}$) implies, by the proof of Proposition 5 on p.~121 in \cite{Adams:1988}, that  $\mu=I_{1-\alpha}\mu_s$ is a measure that satisfies the growth condition $\mu(B(x,r)) \leq C' r^{d-\alpha}$ (in D. Adams' paper, he would write $\mu_s \in L^{1,d-1}_+$ implies $I_{1-\alpha}\mu_s \in L^{1,d-\alpha}$).  Therefore if one had the trace inequality
\begin{align*}
\int_{\mathbb{R}^d} |u| \;d\mu \leq C\int_{\mathbb{R}^d} |D^\alpha u|
\end{align*}
for $u \in L^q(\mathbb{R}^d)\cap C(\mathbb{R}^d)$ for some $1\leq q <d/(1-\alpha)$ such that $D^\alpha u \in L^1(\mathbb{R}^d;\mathbb{R}^d)$ one would deduce
\begin{align*}
\int  |D I_1(\chi_Q \ast \rho_n)| \;d\mu_s \leq C' \int_{\mathbb{R}^d} |D(\chi_Q \ast \rho_n)|.
\end{align*}
Notice we have here utilized the equality $D^\alpha (-\Delta)^{(1-\alpha)/2}(\chi_Q \ast \rho_n) = D (\chi_Q \ast \rho_n)$.  

However, as 
\begin{align*}
\int_{\mathbb{R}^d} |D(\chi_Q \ast \rho_n)| \leq |D\chi_Q|(\mathbb{R}^d),
\end{align*}
this would imply the inequality
\begin{align*}
\int  |D I_1(\chi_Q \ast \rho_n)| \;d\mu_s \leq C' |D\chi_Q|(\mathbb{R}^d).
\end{align*}
As we let $n$ tend to infinity, from the computation in the proof of Lemma \ref{thm2} we see that on the set $Q'_s$ one has the uniform convergence of the continuous functions $D I_1(\chi_Q \ast \rho_n)$ to the continuous function $D I_1\chi_Q$.  Therefore Lebesgue's dominated convergence theorem would yield
\begin{align*}
\int  |D I_1\chi_Q| \;d\mu_s \leq C' |D\chi_Q|(\mathbb{R}^d),
\end{align*}
and Chebychev's inequality would, in turn, imply that for any $t>0$
\begin{align*}
t \mu_s(\{|D I_1\chi_Q|>t\}) \leq  C' |D\chi_Q|(\mathbb{R}^d).
\end{align*}
However, the proof of Lemma \ref{thm2} shows that we can make the left-hand-side as large as we like by choosing $s,t$ appropriately, and the result is demonstrated.
\end{proof}

\section*{Acknowledgements}
The author would like to thank the referees for their careful reading and many comments that have greatly improved the paper.  Needless to say the author is responsible for the remaining shortcomings.  The author is supported in part by the Taiwan Ministry of Science and Technology under research grant 107-2115-M-009-002-MY2.


\bibliographystyle{amsplain}


\end{document}